\documentclass[11pt]{amsart}
\usepackage[
  top=3cm,
  bottom=2.5cm,
  left=2.5cm,
  right=2.5cm
]{geometry}
\usepackage[T1]{fontenc}
\usepackage{lmodern}
\usepackage{microtype}
\usepackage{amsmath,amssymb,mathtools,color}
\usepackage{booktabs}
\usepackage{enumitem}
\usepackage[colorlinks=true,linkcolor=blue,citecolor=blue,urlcolor=blue]{hyperref}
\usepackage[nameinlink,capitalise]{cleveref}
\newtheorem{theorem}{Theorem}[section]
\newtheorem{proposition}[theorem]{Proposition}
\newtheorem{lemma}[theorem]{Lemma}
\newtheorem{corollary}[theorem]{Corollary}
\newtheorem{conjecture}[theorem]{Conjecture}
\theoremstyle{definition}
\newtheorem{definition}[theorem]{Definition}
\theoremstyle{remark}
\newtheorem{remark}[theorem]{Remark}

\newcommand{\R}{\mathbb R}

\newcommand{\cO}{\mathcal O}
\newcommand{\Kah}{\operatorname{Kah}}
\newcommand{\Psef}{\operatorname{Psef}}

\title[Uniruledness and the sign of total scalar curvature]{Uniruledness and the sign of total scalar curvature}
\author{Zehao Sha \and Jian Wang}
\date{}

\begin{document}

\begin{abstract}
For every integer $n\ge3$, we construct a smooth projective manifold $X$ of complex dimension $n$ whose canonical bundle is not pseudoeffective, or equivalently, which is uniruled, but every K\"ahler metric has negative total scalar curvature. In particular, $X$ admits no K\"ahler metric of positive scalar curvature, while it admits a Riemannian metric of positive scalar curvature. Thus the equivalence between uniruledness and the existence of a K\"ahler metric with positive scalar curvature holds in complex dimensions one and two, but fails in higher dimensions.
\end{abstract}
\maketitle

\section{Introduction}

Let $(X,\omega)$ be a compact K\"ahler manifold of complex dimension $n$ and let $\alpha=[\omega] \in \Kah(X)$ be its K\"ahler class. The relationship between uniruledness and the existence of positive scalar curvature K\"ahler metrics connects the birational invariant of compact K\"ahler manifolds with their curvature property. A necessary condition for such a metric is that its total scalar curvature is positive, which is determined by its K\"ahler class:
\[
 \int_XS(\omega)\,\omega^n
 =2\pi n\,c_1(X)\cdot\alpha^{n-1}
 =-2\pi n\,K_X\cdot\alpha^{n-1}.
\]
Thus, the existence of a K\"ahler metric with positive scalar curvature implies the canonical bundle $K_X$ is not pseudoeffective. In the projective setting, the canonical bundle not being pseudoeffective is equivalent to uniruledness by the theorem of Boucksom--Demailly--P\u{a}un--Peternell \cite[Corollary~0.3]{BDPP13}. Based on this result, Heier and Wong \cite[Theorem~1.1]{HeierWong2012} showed that the existence of a K\"ahler metric with positive total scalar curvature implies uniruledness. More recently, Ou \cite[Theorem~1.1]{Ou2025} established the same equivalence for arbitrary compact K\"ahler manifolds. Consequently, every compact K\"ahler manifold admitting a K\"ahler metric with positive  scalar curvature is uniruled.

A natural converse question asks whether every uniruled compact K\"ahler manifold admits a K\"ahler metric with positive scalar curvature. For compact K\"ahler surfaces, the answer is affirmative by the work of Yau \cite{Yau1974} and Brown \cite{Brown2026}. Yau \cite{Yau1974} established that the existence of a K\"ahler metric with positive total scalar curvature is equivalent to its Kodaira dimension $\kappa(X)=-\infty$, which, for compact K\"ahler surface, is equivalent to the uniruledness. More recently, Brown \cite{Brown2026} proved every such surface admits a K\"ahler metric with positive scalar curvature. In higher dimensions, R\u{a}sdeaconu \cite{Rasdeaconu2005,Rasdeaconu2009} studied a related question suggested by LeBrun, whether the condition Kodaira dimension equals \(-\infty\) guarantees the existence of a K\"ahler metric of positive total scalar curvature. This question forms part of a broader effort to characterize uniruledness through the scalar curvature of K\"ahler metrics. 

In this direction, Yang explicitly formulated the following conjecture concerning positive total scalar curvature. A strengthened formulation involving pointwise positive scalar curvature appears in \cite[Conjecture~7.4]{Brown2026}.

\begin{conjecture}[Yang {\cite[Conjecture~4.7]{Yang2019}}]
\label{conj:yang}
A compact K\"ahler manifold \(X\) is uniruled if and only if it admits a K\"ahler metric of positive total scalar curvature.
\end{conjecture}

Our main result gives a negative answer to Conjecture \ref{conj:yang}.

\begin{theorem}\label{thm:A}
For every integer $n\geq3$, there exists a smooth projective manifold $X$ of complex dimension \(n\) such that
\begin{enumerate}[label=\textup{(\roman*)}]
  \item $K_X\notin\Psef(X)$, equivalently, $X$ is uniruled;
  \item $K_X\cdot\alpha^{n-1}>0$ for every $\alpha\in\Kah(X)$;
  \item $X$ admits a Riemannian metric of positive scalar curvature.
\end{enumerate}
\end{theorem}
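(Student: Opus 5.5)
The plan is to take $X$ to be an iterated blow-up of a product $Y=B\times\mathbb P^1$, where $B$ is a smooth projective $(n-1)$-fold with $K_B$ ample. Before constructing anything, I would note what the product alone does. On $Y$ we have $K_Y=p^*K_B-2q^*[\mathrm{pt}]$. For $\alpha=p^*\beta+t\,q^*[\mathrm{pt}]$ one gets
\[
K_Y\cdot\alpha^{n-1}=(n-1)\,t\,K_B\cdot\beta^{n-2}-2\,\beta^{n-1},
\]
which is negative as $t\to0$. The same thing happens for any regular $\mathbb P^1$-fibration, and more generally for any morphism whose general fibre $F$ is rationally connected. Letting $\alpha$ degenerate to the pull-back of a class on the base gives a leading term proportional to $K_F\cdot\gamma^{\dim F-1}<0$.

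So the guiding principle is that, on $X$, the rational curves must not be allowed to shrink while the base stays large. The blow-up centres have to make this impossible. Part (i) is automatic, since blow-ups of uniruled manifolds stay uniruled, and by \cite[Corollary~0.3]{BDPP13} this is the same as $K_X\notin\Psef(X)$.

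\textbf{Constructing $X$.} Let $\pi\colon X\to Y$ be the blow-up of $Y$ along many disjoint smooth centres $Z_i$ of codimension $r_i\ge2$. Then
\[
K_X=\pi^*K_Y+\sum_i(r_i-1)E_i .
\]
A Kähler class has the form $\alpha=\pi^*a-\sum_i\varepsilon_iE_i$. The centres would be arranged so that every fibre $\{b\}\times\mathbb P^1$, or a prescribed large family of them, meets them. Kählerness of $\alpha$ then forces the fibre degree $t=a\cdot[\text{fibre}]$ to be bounded below by $\sum_i\varepsilon_i\,(E_i\cdot\tilde F)$ plus a term coming from the positivity needed on the exceptional divisors. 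Choosing the $Z_i$ to be sections, or multisections, of $Y\to B$ would tie $t$ to the size of $\beta$, which is exactly the coupling the product lacks. In parallel, the exceptional contributions $\sum_i(r_i-1)\,E_i\cdot\alpha^{n-1}$ are positive. For point blow-ups this contribution is $(n-1)\varepsilon^{n-1}$, and for curve and surface centres it is computed from the Segre classes of the normal bundles.

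\textbf{Intersection-number step.} The proof of (ii) then reduces to an explicit polynomial in $(a,\varepsilon_i)$, and I would show it is positive on the whole Kähler cone $\Kah(X)$. I would expand $K_X\cdot\alpha^{n-1}$ using $E_i^{k}$ pushed forward to $Z_i$, bound the single negative term $-2\beta^{n-1}$ by the forced lower bound on $t$ (through the term $t\,K_B\cdot\beta^{n-2}$), and take $K_B$ sufficiently positive relative to the chosen classes. This means twisting $B$ by a high power of an ample bundle, or taking $B$ of large degree in projective space.

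\textbf{Main obstacle.} I expect the real difficulty to be describing $\Kah(X)$ precisely enough, that is, proving that the nef cone of the blow-up really enforces the needed lower bound on $t$ for all $\alpha$, not just for the obvious test curves. I would first try to choose the centres so that $X$ carries enough explicit curves (strict transforms of fibres, and fibres of $E_i\to Z_i$) to cut out a polyhedral outer bound for the nef cone, and check positivity on that larger cone.

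\textbf{Part (iii).} $Y=B\times S^2$ carries a positive scalar curvature metric, obtained by shrinking the round $S^2$ factor. Each blow-up along a centre of real codimension $2r_i\ge4$ is, diffeomorphically, obtained by surgeries in codimension at least $3$: for points it is a connected sum with $\overline{\mathbb{CP}}{}^n$, and in general one replaces a tubular neighbourhood of $Z_i$ by a disc bundle over $\mathbb P(N_{Z_i})$. Hence the Gromov--Lawson surgery theorem propagates positive scalar curvature to $X$. This uses $n\ge3$.
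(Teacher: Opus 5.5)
\textbf{Gap: no blow-up of $B\times\mathbb P^1$ can satisfy (ii).} The step meant to prove (ii) cannot work. Blowing up $Y=B\times\mathbb P^1$ along centres of codimension $\ge2$, iterated or not, runs straight into the obstruction you describe in your first paragraph.

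Let $f=p\circ\pi\colon X\to B$. Every centre has dimension at most $n-2$, so its image in $B$ is a proper closed subset. Over a general $b\in B$ the fibre $f^{-1}(b)=\{b\}\times\mathbb P^1$ is untouched. Hence $f$ is a morphism with general fibre $\ell\cong\mathbb P^1$, and $K_X\cdot\ell=-2$. For $A$ ample on $B$, the class $f^*A$ is nef and
\[
K_X\cdot(f^*A)^{n-1}=(A^{n-1})\,(K_X\cdot\ell)=-2\,A^{n-1}<0 .
\]
So $K_X\cdot(f^*A+t\omega)^{n-1}<0$ for any K\"ahler $\omega$ and small $t>0$.

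In your coordinates this is $a=p^*(A+t\beta_0)+t\,t_0\,q^*[\mathrm{pt}]$ and $\varepsilon_i=t\,\varepsilon_i^0$. Any lower bound on the fibre degree that K\"ahlerness imposes is linear in the $\varepsilon_i$. These can tend to $0$ together with the fibre degree while $\beta$ stays fixed. Making $K_B$ more positive does not help, because your only positive term, $(n-1)t\,K_B\cdot\beta^{n-2}$, vanishes as $t\to0$. The proposed remedy is also unavailable. A section or multisection of $Y\to B$ has dimension $n-1$, so it is a (Cartier) divisor and blowing it up does nothing. A centre of codimension $\ge2$ cannot meet every fibre. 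So the obstacle you flag, describing $\Kah(X)$ precisely, is not the real issue; the construction itself is blocked.

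\textbf{What is actually needed, and how the paper does it.} You need a uniruled $X$ whose rational curves do not come from a fibration along which K\"ahler classes can degenerate to a nef class on which $K_X$ is negative. The paper avoids fibrations entirely. It finds a smooth projective toric threefold $M$ with a nef $H$, trivial on an invariant divisor $D'$, such that $L=K_M+\tfrac83H$ is not pseudoeffective, $L\cdot\beta^2>0$ on $\Kah(M)$, and $L\cdot D'^2\ge0$. It then realizes $L$ as a canonical class through a $(\mathbb Z/3)^3$ Kummer cover $\pi\colon Z\to M$ branched over four general members of $|H|$, so that $K_Z\sim_{\mathbb Q}\pi^*L$.

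The delicate point is that $H^{1,1}(Z)$ is strictly larger than $\pi^*H^{1,1}(M)$. An eigensheaf computation shows that the extra $26$ dimensions are spanned by differences of the $27$ disjoint lifts $D'_g$ of $D'$. This gives
\[
K_Z\cdot\alpha^2=27\,L\cdot\beta^2+\Bigl(\sum_g u_g^2\Bigr)L\cdot D'^2>0 .
\]
For $n\ge4$ the paper takes the product with $E^{n-3}$, where $E$ is an elliptic curve. Part (iii) comes from $Z$ being simply connected and non-spin (via Catanese's formula and (S3)), together with Gromov--Lawson. Your surgery argument for (iii) on blow-ups is plausible, but it is moot because your $X$ fails (ii).
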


By \textup{(ii)}, \(X\) admits no K\"ahler metric with nonnegative scalar curvature. Nevertheless, \textup{(iii)} ensures that the underlying smooth manifold admits a Riemannian metric with positive scalar curvature. Thus this distinction between K\"ahler and Riemannian positive scalar curvature persists even among uniruled projective manifolds.

\vspace{2mm}

\noindent \textbf{Statement on the use of AI.}
The explicit example in Appendix \ref{app:toric-seed} was constructed by the agent \href{https://github.com/frenzymath/Rethlas}{Rethlas} via ChatGPT-5.6-sol. All mathematical statements are written by the authors. 



\section{The admissible toric threefold and its abelian cover}

In this section, we construct the three-dimensional counterexample in Theorem \ref{thm:A} as an abelian cover of a smooth projective toric $3$-fold. We first specify the required properties of the base and construct the cover. We then compute its cohomology, identifying the additional divisor classes, and establish the canonical class inequality in Proposition \ref{prop:criterion}.

\subsection{Admissible toric $3$-folds} 

We introduce the class of admissible toric $3$-folds, which serve as base manifolds for the abelian cover. An explicit admissible toric $3$-fold is constructed in Appendix~\ref{app:toric-seed}.

\begin{definition}\label{def:seed}
A smooth projective toric $3$-fold $M^3$ is called \emph{admissible} if there exist a nef Cartier divisor $H$ and a torus-invariant prime divisor $D'\subset M^3$ satisfying the following conditions:  

\begin{enumerate}[label=\textup{(S\arabic*)}]
  \item  $\mathcal{O}_M(H)|_{D^\prime}\simeq \mathcal{O}_{D^\prime}$ and $ H^2\cdot D>0$ for any torus-invariant prime divisor $D\neq D'$. 
 
  \item The $\mathbb{Q}$-divisor  $L:=K_M+\frac83H $ is not pseudoeffective and satisfies that for any $\alpha\in \Kah(M)$
  \[
     L\cdot \alpha^2>0 \qquad \text{and }\qquad
     L\cdot {D'}^2\geq0.
  \]
  \item There are integral curves $C_1\subset D'$ and $C_2\subset M^3$  such that 
  \begin{equation}\label{interset-assump}
     D'\cdot C_1\neq0,\qquad
     c_1(M)\cdot C_1\equiv1\pmod2,\qquad
     H\cdot C_2\not\equiv0\pmod3. 
  \end{equation}
\end{enumerate}
\end{definition}

Throughout this section, we fix a triple $(M,H,D')$ satisfying
the conditions of Definition~\ref{def:seed}. Since $H$ is nef,  \(\mathcal{O}_M(H)\) is  globally generated by \cite[Theorem~6.3.12]{CLS11}. We may choose a nonzero torus eigensection  $s\in  H^0(M, \mathcal O_M(H))$. Its zero divisor is torus invariant and takes the form   
\begin{equation}\label{divisor-decom}
 \operatorname{div}(s)=\sum_\rho a_\rho D_\rho,
 \qquad a_\rho\geq0,
\end{equation}
where $\rho$ ranges over the rays of the fan of $M$ and \(D_\rho\) denotes the corresponding  torus-invariant prime divisors. 

\begin{remark}\label{positivity-H3} Since $\mathcal{O}_M(H)|_{D'}\simeq\mathcal{O}_{D'}$, its first Chern class vanishes. Thus, \[H\cdot D'\cdot D=(H|_{D'})\cdot (D|_{D'})=0\] for every divisor $D$. 

There exists $\rho$ such that $a_\rho>0$ and $D_\rho\neq D'$. Otherwise, $\operatorname{div}(s)=a D'$, where $a\ge 0$. It follows that 
$
 H^2\cdot D
 =H\cdot\operatorname{div}(s)\cdot D
 =aH\cdot D'\cdot D=0
$ for any torus-invariant prime divisor $D$
which leads to a contradiction with (S1) in Definition \ref{def:seed}.  Consequently, 
\[
H^3=H^2\cdot \operatorname{div}(s)=\sum_\rho a_\rho H^2\cdot D_\rho>0.
\]
Thus $H$ is big by \cite[Theorem~2.2.16]{Lazarsfeld04}.
\end{remark}

\subsection{Construction of the abelian cover} \label{sec:abliean-cover}

By Remark \ref{positivity-H3}, the complete linear system $|H|$ gives a morphism
\[
\varphi_H\colon M\to
\mathbb P\bigl(H^0(M,\mathcal O_M(H))^*\bigr)
\] whose image is 3-dimensional. Since $\cO_M (H)$ is globally generated and restricts trivially to $D'$, the members of $|H|$ disjoint from $D'$ form a nonempty Zariski open subset. Bertini's smoothness theorem \cite[Corollary~III.10.9]{Hartshorne77} and irreducibility theorem \cite[Theorem~3.3.1]{Lazarsfeld04} allow us to choose four distinct smooth irreducible divisors \(\Delta_1,\Delta_2,\Delta_3,\Delta_4\in|H|\) such that
\[
D'\cap\Delta_i=\varnothing \quad \text{for }1\leq i\leq4,
\qquad
\sum_{i=1}^4\Delta_i
\text{ has simple normal crossings}.
\]

For each $i$, choose a section $s_i\in H^0(M,\cO_M(H))$ with $\operatorname{div}(s_i)=\Delta_i$. Write $K:=\mathbb{C}(M)$ and consider the Kummer extension 
\begin{equation}
    K':=K\left(
 \sqrt[3]{s_1/s_4},\sqrt[3]{s_2/s_4},\sqrt[3]{s_3/s_4}\right).
\end{equation}
For $1\leq i, j\leq 3$, the valuation along $\Delta_i$ satisfies $\operatorname{ord}_{\Delta_i}(s_j/s_4)=\delta_{i j}$. Hence, $s_1/s_4,s_2/s_4,s_3/s_4$ are linearly independent in $K^*/ K^{*3}$. Kummer theory \cite[Theorem~5.30 and Remark~5.32]{MilneFT}  gives
\begin{equation}
    [K':K]=3^3 \qquad G:=\text{Gal}(K': K)\cong (\mathbb{Z}/3\mathbb{Z})^3
\end{equation}

To describe the Galois action explicitly, write $t_j:=\sqrt[3]{s_j/s_4}\in K'$ for $1\leq j\leq 3$, and fix a primitive cube root of unity $\zeta_3$. We may choose a basis $e_1$, $e_2$ and $e_3$ of $G$ such that 
\begin{equation}
    e_i(t_j)=\zeta_3^{\delta_{ij}}t_j
\end{equation}

\vspace{2mm}

Let $\pi: Z\rightarrow M$ be the normalization of $M$ in $K'$. Since $[K':K]=27$ , it is a finite morphism  of degree $27$. The Galois action on $K'$ preserves the integral closures of the affine coordinate rings of \(M\) in \(K'\) and induces a \(G\)-action on $Z$. The morphism $\pi$ identifies $M$ with the quotient $Z/G$.

\begin{remark}\label{inertia-group}
The branch locus of this cover is $\cup_{i=1}^4 \Delta_i$.
For $i=1,\dots,4$, let $G_i\subset G$ denote the inertia subgroup along $\Delta_i$, equivalently, the stabilizer of a geometric point lying over a general point of $\Delta_i$. Then
\[
G_i=\langle e_i\rangle
\quad \text{for }1\leq i\leq3,
\qquad
G_4=\langle -e_1-e_2-e_3\rangle.
\]
In particular, each $G_i$ is cyclic of order $3$.

Indeed, along $\Delta_i$ for $1\leq i\leq3$, only $s_i/s_4$ has nonzero valuation, namely $1$, so the inertia subgroup is generated by $e_i$. Along $\Delta_4$, all three ratios $s_j/s_4$ have valuation $-1$, and the inertia subgroup is generated by the automorphism
$
t_j\longmapsto\zeta_3^{-1}t_j
$
which corresponds to $-e_1-e_2-e_3$.
\end{remark}

Any point of $M$ lies on at most three branch components.
Suppose that $p\in M$ lies on exactly $r$ components, say $\Delta_{i_1},\ldots,\Delta_{i_r}$. Their inertia generators are linearly independent in $G$ (see Remark \ref{inertia-group}).  Thus , for every $z\in\pi^{-1}(p)$, the germ of $\pi$ at $z$ is analytically isomorphic to
\[
(\mathbb C^3,0)\longrightarrow(\mathbb C^3,0),
\qquad
(w_1,w_2,w_3)\longmapsto
(w_1^3,\ldots,w_r^3,w_{r+1},\ldots,w_3),
\]where the branch components through $p$ correspond to the coordinate hyperplanes $\{x_1=0\},\ldots,\{x_r=0\}$. 

In particular, the branch divisor has simple normal crossings, and the natural map
\[
G_{i_1}\times\cdots\times G_{i_r}\longrightarrow G
\]
is injective whenever the corresponding components meet. Using Pardini’s smoothness criterion \cite[\S1, p.~722]{Pardini98}, we conclude that $Z$ is smooth. Moreover, since $G$ is finite and $M$ is projective, the map $\pi: Z\rightarrow M$ is proper. Because \(\pi\) is finite and \(M\) is projective, the pullback of an ample line bundle on \(M\) is ample on \(Z\). Hence \(Z\) is projective.

\subsection{Eigensheaf and cohomology} The Galois action on $Z$ induces an action on $\pi_*\cO_Z$. Since $G$ is finite abelian and the base field is $\mathbb{C}$, this sheaf decomposes into eigensheaves indexed by the character group  $G^*:=\operatorname{Hom}(G, \mu_3)$, where $\mu_3\subset \mathbb{C}^*$ denotes the group of cube roots of unity.  
For any $\chi\in G^*$, define the eigensheaf by $(\pi_*\cO_Z)^\chi$
\begin{equation}
    (\pi_*\cO_Z)^\chi(U):=\{x\in \cO_Z(\pi^{-1} U)~|~g\cdot x=\chi(g)x \text{ for any } g\in G\}
\end{equation}
for any open set $U\subset M$. Thus,  
\begin{equation}\label{char-decomp}
   \pi_*\cO_Z=\bigoplus_{\chi\in G^*}(\pi_*\cO_Z)^{\chi} 
\end{equation}
The eigensheaf corresponding to the trivial character $\mathbf{1}\in G^*$ is $G$-invariant. Since $M=Z/G$, it follows that  $(\pi_*\cO_Z)^{\mathbf{1}}\simeq\cO_M$. 

\vspace{2mm}

Let $e_1^*,e_2^*, e^*_3$ be the dual characters with $e^{*}_{i}(e_j)=\zeta^{\delta_{ij}}_3\in \mu_3$. Each $\chi\in G^*$ can be written uniquely 
\begin{equation}
    \chi=(e^*_1)^{a_1(\chi)}\cdot(e^*_2)^{a_2(\chi)}\cdot(e^*_3)^{a_3(\chi)}, \quad a_i(\chi)\in\{0,1,2\}
\end{equation}
Define $a_4(\chi)\in \{0,1,2\}$ with $\chi(-e_1 -e_2-e_3)=\zeta^{a_4(\chi)}_3$. Thus, $a_4(\chi)\equiv-\sum_{i=1}^3a_i(\chi)\quad (\operatorname{mod} 3)$. It follows that 
\begin{equation}\label{constant-def}
    k_\chi
:=
\frac13\sum_{i=1}^4a_i(\chi)
=
\left\lceil\frac{\sum_{i=1}^3a_i(\chi)}3\right\rceil
\end{equation}
In particular, $k_{\mathbf{1}}=0$, whereas $k_\chi\in \{1,2 \}$ for $\chi\neq \mathbf{1}$. 

\begin{lemma}\label{eigen-isom-fun-sheaf}For any  $\chi\in G^*$, there is an isomorphism of $\cO_M$-modules 
\begin{equation}\label{eq:eigen-O}
(\pi_*\cO_Z)^\chi\simeq \cO_M(-k_\chi H).
\end{equation}
\end{lemma}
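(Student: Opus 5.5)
The plan is to identify each eigensheaf with an explicit subsheaf of the function field $K$, multiplied by a monomial in the $t_j$. Then we compute that subsheaf from the valuations along the branch divisors.

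\textbf{Step 1: the eigenspaces of $K'$.} Fix $\chi$ and put $a_i=a_i(\chi)$. Consider the monomial
\[
m_\chi:=t_1^{a_1}t_2^{a_2}t_3^{a_3}\in K'.
\]
By the formula for the Galois action, $e_i(m_\chi)=\zeta_3^{a_i}m_\chi$, so $m_\chi$ is a $\chi$-eigenvector. The extension $K'/K$ is Kummer with basis given by such monomials, so the $\chi$-eigenspace of $K'$ is exactly $K\cdot m_\chi$. Consequently, on any open set $U\subset M$ we have
\[
(\pi_*\cO_Z)^\chi(U)=\{f\,m_\chi : f\in K,\ f m_\chi \text{ integral over } \cO_M(U)\}.
\]
Since $Z$ is normal, integrality can be tested by valuations along prime divisors of $Z$, hence along prime divisors of $M$ together with their ramification indices.

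\textbf{Step 2: the valuation condition.} Since $m_\chi^3=\prod_{j\le 3}(s_j/s_4)^{a_j}\in K$, the order of $f m_\chi$ along any prime divisor $E$ of $M$ is the rational number
\[
\operatorname{ord}_E(f)+\tfrac13\operatorname{ord}_E(m_\chi^3).
\]
It is enough to check nonnegativity of this quantity, because the ramification index along $E$ is $1$ or $3$. Off the branch locus the correction term vanishes. The correction term is $a_i/3$ along $\Delta_i$ for $i\le 3$, and $-\tfrac13\sum_{j\le3} a_j$ along $\Delta_4$. So $f m_\chi$ is regular if and only if $\operatorname{div}(f)+\lfloor \operatorname{div}(m_\chi^3)/3\rfloor\ge 0$ on $U$. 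Here
\[
\left\lfloor \tfrac13\operatorname{div}(m_\chi^3)\right\rfloor
=-\left\lceil \tfrac13\textstyle\sum_{j\le3}a_j\right\rceil \Delta_4 = -k_\chi\Delta_4,
\]
using $0\le a_i\le 2$ for $i\le3$ and \eqref{constant-def}. Hence
\[
(\pi_*\cO_Z)^\chi\simeq \cO_M(-k_\chi\Delta_4)\cdot m_\chi\simeq \cO_M(-k_\chi H),
\]
since $\Delta_4\in|H|$. Multiplication by $m_\chi$ is $\cO_M$-linear, so this is an isomorphism of $\cO_M$-modules.

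\textbf{Step 3: as a consistency check,} one can rewrite $m_\chi$ symmetrically as
\[
\prod_{i\le4} s_i^{a_i/3}\big/\,s_4^{k_\chi},
\]
which matches the standard Pardini description $L_\chi=\tfrac13\sum_i a_i(\chi)\Delta_i=k_\chi H$. Alternatively, the whole lemma can be cited from Pardini's structure theorem for abelian covers.

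\textbf{Main obstacle.} The delicate point is justifying that regularity on the normal variety $Z$ is detected by divisorial valuations with the stated ramification indices. Concretely, this means that the order of $t_i$ along the component over $\Delta_i$ is $1/3$ of $\operatorname{ord}(s_i/s_4)$ in the normalized sense. This follows from the local analytic model $w\mapsto w^3$ established in the previous subsection, together with normality of $Z$ and Hartogs.
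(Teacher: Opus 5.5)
Your proof is correct and follows essentially the paper's route: you identify the $\chi$-eigenspace of $K'$ as $K\cdot m_\chi$, then determine which $f\in K$ make $f m_\chi$ regular, which yields $\cO_M(-k_\chi\Delta_4)\cong\cO_M(-k_\chi H)$. The one difference is that the paper sidesteps your ``main obstacle'': on an affine open $U=\operatorname{Spec}R$ with integral closure $R'$ in $K'$, one has $f m_\chi\in R'$ if and only if $(f m_\chi)^3\in R$ (normality of $R$ gives one direction, the monic polynomial $T^3-(f m_\chi)^3$ the other), so the whole computation takes place on $M$ and needs no valuations on $Z$, ramification indices, or local models. Indeed, in your normalized valuation the identity $v(t_i)=\tfrac13\operatorname{ord}(s_i/s_4)$ already follows from $t_i^3=s_i/s_4$.
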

\begin{proof} Fix $\chi\in G^*$ and set $u_\chi:=t_1^{a_1(\chi)}\cdot t_2^{a_2(\chi)}\cdot t_3^{a_3(\chi)}\in K'$. By the definition of the Galois action, it follows that for any $g\in G$
\begin{equation}
    g\cdot u_\chi=\chi(g)u_\chi .
\end{equation}
Moreover, every non-zero element $u\in K'$ of character $\chi$ (i.e. $g\cdot u=\chi(g)u$ for any $g\in G$) is of the form $u=qu_\chi$, where $q\in K$. Indeed, $u/u_\chi$ is $G$-invariant and belongs to $(K')^G=K$.  

\vspace{2mm}

Let $U=\operatorname{Spec} R\subset M$ be any non-empty affine open subset and $R'$ be the integral closure of $R$ in $K'$. Thus, $\pi^{-1}(U)=\operatorname{Spec} R'$ and  
\[
\Gamma(U, (\pi_*\cO_Z)^\chi)=R'\cap K\cdot u_\chi. 
\]
Moreover,  $qu_\chi\in R'$ if and only if $(qu_\chi)^3\in R$. Indeed, if $qu_\chi\in R'$, then $(q u_\chi)^3$ belongs to $R'\cap K=R$, since $R$ is integrally closed. Conversely, if $(qu_\chi)^3\in R$, then $qu_\chi$ satisfies the monic polynomial $T^3-(qu_\chi)^3\in R[T]$, which implies that $qu_\chi\in R'$. 

\vspace{2mm}

For $q\neq 0$,  $(qu_\chi)^3$ is regular on $U$ if and only if its divisor on $U$ is effective.  Since 
\[\operatorname{div}_U ((u_\chi q)^3)=3\operatorname{div}_U(q)+\sum_{i=1}^3a_i(\chi)\Delta_i|_U-\sum_{i=1}^3a_{i}(\chi)\Delta_4|_U\] and $0\leq a_i(\chi)\le 2$,
it follows by \eqref{constant-def}
\[ 
(qu_\chi)^3\in R\iff \operatorname{div}_U(q)\geq k_\chi \Delta_4|_U\iff q\in \Gamma (U, \cO_M(-k_\chi\Delta_4)).
\]
Multiplication by $u_\chi$ gives an isomorphism 
\[
\cO_M(-k_\chi H)\simeq\cO_M(-k_\chi \Delta_4)\simeq(\pi_*\cO_Z)^\chi,
\]
which completes the proof.
\end{proof}

Combining the isomorphism \eqref{eq:eigen-O} with the character decomposition \eqref{char-decomp}  also obtains the cohomology-vanishing result on  $Z$. 
\begin{corollary}
The cohomology $H^{q}(Z, \cO_Z)$ vanishes for $q=1,2$. 
\end{corollary}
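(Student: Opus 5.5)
Since $\pi$ is finite, hence affine, the Leray spectral sequence degenerates and gives $H^q(Z,\cO_Z)\simeq H^q(M,\pi_*\cO_Z)$ for all $q$. By the character decomposition \eqref{char-decomp} and Lemma~\ref{eigen-isom-fun-sheaf}, this becomes
\[
H^q(Z,\cO_Z)\simeq H^q(M,\cO_M)\oplus\bigoplus_{\chi\neq\mathbf 1}H^q\bigl(M,\cO_M(-k_\chi H)\bigr),
\qquad k_\chi\in\{1,2\}.
\]
The proof therefore reduces to two vanishing statements on the toric base $M$, which we treat separately.

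For the trivial character, $M$ is a smooth projective toric variety, so $H^q(M,\cO_M)=0$ for all $q>0$; this is the standard vanishing of higher cohomology of the structure sheaf on complete toric varieties (e.g.\ \cite[Theorem~9.2.3]{CLS11}, or Demazure vanishing for the nef divisor $0$).

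For the nontrivial characters, fix $k\in\{1,2\}$. We apply Serre duality:
\[
H^q\bigl(M,\cO_M(-kH)\bigr)\simeq H^{3-q}\bigl(M,\cO_M(K_M+kH)\bigr)^{*}.
\]
By Remark~\ref{positivity-H3}, $H$ is nef and big, and so is $kH$. Kawamata--Viehweg vanishing then gives $H^{j}(M,K_M+kH)=0$ for $j>0$. Since $3-q\in\{1,2\}$ when $q\in\{1,2\}$, both relevant groups vanish. Alternatively, one can stay within toric geometry: the toric Kawamata--Viehweg/Batyrev--Borisov vanishing \cite[Theorem~9.3.5]{CLS11} says $H^q(M,\cO_M(-D))=0$ for $q\neq 3$ whenever $D$ is nef and big. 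Either route finishes the argument.

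The main point to verify is that the bigness of $H$ is genuinely available, so that the vanishing theorem applies. Nefness alone would only give Demazure vanishing for $+kH$, not for $-kH$. This is exactly why Remark~\ref{positivity-H3} establishes $H^3>0$. With that in hand, every summand has vanishing $H^1$ and $H^2$, and the corollary follows. Note that $q=3$ is excluded, since $H^3(M,\cO_M(-kH))\simeq H^0(M,K_M+kH)^{*}$ need not vanish.
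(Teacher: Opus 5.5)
Your proposal is correct and matches the paper's proof. Both push forward along the finite map $\pi$ and split $\pi_*\cO_Z$ into eigensheaves via Lemma~\ref{eigen-isom-fun-sheaf}. Both then kill $H^q(M,\cO_M)$ by Demazure vanishing, and kill $H^q(M,\cO_M(-k_\chi H))$ for $q=1,2$ by Serre duality plus Kawamata--Viehweg, which applies because $H$ is nef and big by Remark~\ref{positivity-H3}.
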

\begin{proof} Since $H$ is nef and big, it follows by Serre Duality and the Kawamata--Viehweg vanishing theorem \cite{Kawamata82,Viehweg82} that for $q=0,1,2$ and $k\in\mathbb{Z}_+$
\begin{equation}\label{vanish1}
    H^q(M, \cO_M(-k H))=0.
\end{equation}Moreover, Demazure vanishing theorem \cite{Demazure70} (see also \cite[Theorem~9.2.3]{CLS11}) yields that $H^{q}(M, \cO_M)=0$ for any $q>0$. Since $\pi$ is a finite morphism, it follows from these vanishings and $k_\chi\in \{1, 2\}$ for $\chi\neq \mathbf{1}$  that for $q=1, 2$
\begin{equation}\begin{split}
    H^{q}(Z, \cO_Z)\cong H^q(M, \pi_*\cO_Z)&\cong H^{q}(M, \cO_M)\bigoplus_{\chi\neq \mathbf{1}, \chi\in G^*} H^q(M, (\pi_*\cO_Z)^\chi)\\&\cong H^{q}(M, \cO_M)\bigoplus_{\chi\neq \mathbf{1}, \chi\in G^*} H^q(M, \cO_M(-k_\chi H))=0.
    \end{split}
\end{equation}
Here the three isomorphisms follow, respectively, from the finiteness
of $\pi$, the decomposition \eqref{char-decomp}, and
Lemma~\ref{eigen-isom-fun-sheaf}. The final equality follows from
\eqref{vanish1} and the vanishing of $H^q(M,\cO_M)$. \end{proof}

\subsection{Cohomological splitting and divisor classes}The Galois action on $Z$ also induces an action on $\pi_*\Omega^1_Z$, where $\Omega^1_Z$ is the sheaf of holomorphic $1$-forms on $Z$. Define the corresponding eigensheaf 
\begin{equation*}
(\pi_*\Omega^1_Z)^\chi(U):=\{x\in \Omega^1_Z(\pi^{-1}U)~|~g\cdot x=\chi(g)x \quad \text{ for any } g\in G\}. 
\end{equation*} for any open set $U\subset M$. 
Thus, 
\begin{equation}\label{char-decomp-1-form}
    \pi_*\Omega^1_Z=\bigoplus_{\chi\in G^*} (\pi_*\Omega^1_Z)^\chi.
\end{equation}
The eigensheaf $(\pi_*\Omega^1_Z)^\mathbf{1}$ is $G$-invariant and isomorphic to $\Omega^1_M$. 

\begin{proposition}For any non-trivial character $\chi\in G^*$, the cohomology group $H^{1}(M, (\pi_*\Omega^1_Z)^\chi)$ is isomorphic to $\mathbb{C}$. Moreover, 
\begin{equation}\label{cohom-split}
H^{1,1}(Z,\mathbb C)
\cong \pi^*H^{1,1}(M,\mathbb C)
\oplus\bigoplus_{\substack{\chi\in G^*\\\chi\ne\mathbf1}}
H^1\bigl(M,(\pi_*\Omega_Z^1)^\chi\bigr)
\cong \pi^*H^{1,1}(M,\mathbb C)\oplus\mathbb C^{26}.
\end{equation}
\end{proposition}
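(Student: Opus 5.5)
The plan is to compute each nontrivial eigensheaf of $\pi_*\Omega^1_Z$ explicitly, as Lemma~\ref{eigen-isom-fun-sheaf} did for $\pi_*\cO_Z$. For a $G$-cover we expect Pardini's formula
\[
(\pi_*\Omega^1_Z)^\chi\;\simeq\;\Omega^1_M(\log D_\chi)\otimes\cO_M(-k_\chi H),
\qquad
D_\chi:=\sum_{i\,:\,a_i(\chi)\neq0}\Delta_i .
\]
Here $D_\chi$ is the sum of the branch components whose inertia group $G_i$ is not contained in $\ker\chi$. We would verify this with the local model $(w_1,w_2,w_3)\mapsto(w_1^3,\dots,w_r^3,w_{r+1},\dots)$ and the explicit $\chi$-eigenfunction $u_\chi$ from the proof of Lemma~\ref{eigen-isom-fun-sheaf}. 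Away from the branch locus, $\chi$-eigenforms are $u_\chi\cdot(\text{forms on }M)$. Near $\Delta_i$ with $a_i(\chi)\neq0$, the form $u_\chi\,dx_i/x_i$ is holomorphic upstairs, which produces the logarithmic poles.

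Since $a_4(\chi)\equiv-\sum_{i\le3}a_i(\chi)$, every nontrivial $\chi$ has at least two indices with $a_i(\chi)\ne0$. In particular $D_\chi\neq0$, and $D_\chi$ is disjoint from $D'$.

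Next we feed this into the residue sequence
\[
0\to\Omega^1_M(-k_\chi H)\to\Omega^1_M(\log D_\chi)(-k_\chi H)\to\bigoplus_{a_i(\chi)\ne0}\cO_{\Delta_i}(-k_\chi H)\to0 .
\]
The restriction $H|_{\Delta_i}$ is nef with $(H|_{\Delta_i})^2=H^3>0$ by Remark~\ref{positivity-H3}, so it is big. Kawamata--Viehweg on the surface $\Delta_i$ then kills $H^0$ and $H^1$ of $\cO_{\Delta_i}(-k_\chi H)$. It follows that $H^1(M,(\pi_*\Omega^1_Z)^\chi)\cong H^1(M,\Omega^1_M(-k_\chi H))$, which is Serre dual to $H^2(M,\Omega^2_M(k_\chi H))$.

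The lower bound $\dim\ge1$ we would prove geometrically rather than cohomologically. Since $D'$ misses the branch locus, $\pi^{-1}(D')$ is a disjoint union of $27$ copies $g\widetilde D'_0$, $g\in G$, permuted simply transitively by $G$. Let $C_1$ be the curve from (S3), and lift it to $\widetilde C_1\subset\widetilde D'_0$. Then $\widetilde D'_0\cdot\widetilde C_1=D'\cdot C_1\neq0$, and $\widetilde C_1$ misses the other copies. So the $\chi$-isotypic combination $\sum_g\chi(g)^{-1}[g\widetilde D'_0]$ pairs nontrivially with $\widetilde C_1$ and is therefore nonzero. Its class lies in the $\chi$-part of $H^{1,1}(Z)$.

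That $\chi$-part equals $H^1(M,(\pi_*\Omega^1_Z)^\chi)$. This holds because $H^1(Z,\Omega^1_Z)=H^1(M,\pi_*\Omega^1_Z)$ for the finite map $\pi$, and because the character decomposition \eqref{char-decomp-1-form} is compatible with the $G$-action. The invariant summand gives $\pi^*H^{1,1}(M)$, since $(\pi_*\Omega^1_Z)^{\mathbf 1}\cong\Omega^1_M$. This yields the first isomorphism in \eqref{cohom-split} formally, and the second reduces to the claim that each nontrivial summand is one-dimensional.

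The main obstacle is the upper bound $h^2(M,\Omega^2_M(k_\chi H))\le1$, because $H$ is only nef, so Bott vanishing on toric varieties is not directly available. We would first try the toric (Danilov / Batyrev--Cox) description of $H^q(M,\Omega^p_M(D))$ for a torus-invariant nef $D$ as a sum over lattice points $m$ of the reduced cohomology of subcomplexes of the fan. For nef $D$, only the points with $\langle m,\cdot\rangle$ attaining the support function contribute. The failure of vanishing should be concentrated at the face of the polytope of $H$ corresponding to the contracted divisor $D'$, where $H|_{D'}$ is trivial; by (S1), all other faces carry positive degree. We expect exactly one contribution, coming from $H^2(D',\Omega^2_{D'})\cong\mathbb C$.

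As a fallback, we would use the morphism $\varphi_H$, which contracts $D'$ to a point and satisfies $H=\varphi_H^*A$ with $A$ ample. By Leray and the projection formula, $H^2(M,\Omega^2_M(kH))$ localizes to the higher direct images $R^2\varphi_{H*}\Omega^2_M$ at that point. These should reduce to $H^2(D',\Omega^2_{D'})=\mathbb C$ via the sequence $0\to\Omega^2_M(-D')\to\Omega^2_M\to\Omega^2_M|_{D'}\to0$ and the theorem on formal functions. Combining this upper bound with the lower bound gives each nontrivial summand $\cong\mathbb C$, hence \eqref{cohom-split} with $26$ extra dimensions.
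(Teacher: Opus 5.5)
Your reduction follows the paper's. Pardini's formula for $(\pi_*\Omega^1_Z)^\chi$, the residue sequence, and the vanishing of $H^0$ and $H^1$ of $\cO_{\Delta_i}(-k_\chi H)$ give $H^1(M,(\pi_*\Omega^1_Z)^\chi)\cong H^1(M,\Omega^1_M(-k_\chi H))$. You get that vanishing from Kawamata--Viehweg on the surface $\Delta_i$; the paper gets it from the restriction sequence on $M$ and $H^q(M,\cO_M(-kH))=0$ for $q\le 2$. Both work. Your lower bound, pairing $\sum_g\chi(g)^{\mp1}[D'_g]$ with a lift of $C_1$, is also correct; the paper uses the same argument right after the proposition to produce the generators $w_\chi$.

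\textbf{The gap.} The upper bound $h^1(M,\Omega^1_M(-k_\chi H))\le 1$ is the real content of the proposition, and you only state it as an expectation. Neither of your sketches closes it.
\begin{itemize}
\item The Danilov/Batyrev--Cox computation is never carried out.
\item The Leray route does not localize for the fixed small values $k\in\{1,2\}$, since Serre vanishing is unavailable. You would need an extra vanishing such as $H^2(Y,\varphi_{H*}\Omega^2_M\otimes A^k)=0$ on the possibly singular image $Y$.
\item The fiber over the special point need not be $D'$, because nothing in (S1) stops $\varphi_H$ from contracting further curves. In the appendix example, no cone contains $v_3,v_4,v_5$, so $V(\operatorname{Cone}(v_3,v_4))$ is disjoint from $D_5$. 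It therefore has $H$-degree $0$, meets $D'=D_0$, and is contracted together with it.
\end{itemize}
So the formal-functions reduction to $H^2(D',\Omega^2_{D'})$ via $0\to\Omega^2_M(-D')\to\Omega^2_M\to\Omega^2_M|_{D'}\to0$ does not apply as stated, and the remaining terms are only asserted to vanish.

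\textbf{The missing idea} is the twisted toric Euler sequence
\[
0\to\Omega^1_M(-kH)\to\bigoplus_\rho\cO_M(-D_\rho-kH)\to\operatorname{Pic}(M)\otimes\cO_M(-kH)\to0 .
\]
Its right-hand term is a direct sum of copies of $\cO_M(-kH)$, so its $H^0$ and $H^1$ vanish by Kawamata--Viehweg. Hence
\[
H^1(M,\Omega^1_M(-kH))\cong\bigoplus_\rho H^1(M,\cO_M(-D_\rho-kH)).
\]
The restriction sequence of each $D_\rho$ identifies the corresponding summand with $H^0(D_\rho,\cO_{D_\rho}(-kH))$.
\begin{itemize}
\item For $D_\rho=D'$ this is $\mathbb C$, since $H$ restricts trivially to $D'$ by (S1).
\item For $D_\rho\neq D'$ it is $0$. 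A nonzero section would give an effective $E\sim -kH|_{D_\rho}$ with $0\le H\cdot E=-kH^2\cdot D_\rho<0$.
\end{itemize}
This computes the dimension exactly using only vanishing for line bundles, so your worry that Bott vanishing fails for nef $H$ never comes up. It also makes a separate lower bound unnecessary.
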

\begin{proof} 
Fix a non-trivial character $\chi\in G^*$. 
For each  $\Delta_i$, the twisted restriction sequence is 
\begin{equation*}
    0\rightarrow \cO_M(-\Delta_i-k_\chi H)\rightarrow \cO_M(-k_\chi H)\rightarrow \cO_{\Delta_i}(-k_\chi H)\rightarrow 0.
\end{equation*}Since $\Delta_i\sim H$, the first term is isomorphic to
$\cO_M(-(k_\chi+1)H)$. The associated long exact sequence in cohomology, together with \eqref{vanish1}, therefore gives that for $q=0,1 $,
\begin{equation}\label{vanish2}
    H^{q}(M, \cO_{\Delta_i}(-k_\chi H))=0.
\end{equation}

Let $\Delta_\chi:=\sum_{a_i(\chi)\neq 0}\Delta_i$. The Poincar\'e residue sequence  \cite{CLS11} for  \(\Delta_\chi\), twisted by $-k_\chi H$, is 
\begin{equation}\label{eq:residue}
0\to\Omega_M^1(-k_\chi H)\to
\Omega_M^1(\log\Delta_\chi)\otimes\cO_M(-k_\chi H)\to
\bigoplus_{\Delta_i\subset\Delta_\chi}\cO_{\Delta_i}(-k_\chi 
H)\to0.
\end{equation}
By \cite[Proposition~1.2]{Pardini98}, the middle term is isomorphic
to $(\pi_*\Omega_Z^1)^\chi$. Thus \eqref{vanish2} and the long
exact sequence associated with \eqref{eq:residue} yield
\begin{equation}
H^1\bigl(M,(\pi_*\Omega_Z^1)^\chi\bigr)
\cong H^1\bigl(M,\Omega_M^1(-k_\chi H)\bigr).
\end{equation}

It remains to show that $H^1(M, \Omega^1_M(-k_\chi H))\cong\mathbb{C}$. The toric Euler sequence \cite[Theorem~8.1.6]{CLS11}, twisted by $-k_\chi H$, is 
\begin{equation}\label{eq:toric-Euler-seq}
0 \rightarrow \Omega_M^1\left(-k_\chi H\right) \rightarrow \bigoplus_\rho \mathcal{O}_M\left(-D_\rho-k_\chi H\right) \rightarrow \operatorname{Pic}(M) \otimes \mathcal{O}_M\left(-k_\chi H\right) \rightarrow 0 .
\end{equation} 
where $D_\rho$ ranges over the torus-invariant prime divisors of $M$.
By \eqref{vanish1}, the last term has vanishing cohomology in degrees $0$ and $1$. The associated long exact sequence therefore yields 
\begin{equation} \label{eq:HD}
H^1(M,\Omega_M^1\left(-k_\chi H\right)) = \bigoplus_\rho H^1(M, \mathcal{O}_M\left(-D_\rho-k_\chi H\right) ).
\end{equation}

To complete the computation, we consider the restriction sequence for $D_\rho$, twisted by
$\cO_M(-k_\chi H)$
\begin{equation}\label{eq:Drho_seq}
0\longrightarrow\cO_M(-D_\rho-k_\chi H)
\longrightarrow\cO_M(-k_\chi H)
\longrightarrow\cO_{D_\rho}(-k_\chi H)
\longrightarrow0.
\end{equation}
By \eqref{vanish1}, the middle term has vanishing cohomology in
degrees $0$ and $1$. The associated long exact sequence  gives
\begin{equation}\label{rho-cohom}
H^1\bigl(M,\cO_M(-D_\rho-k_\chi H)\bigr)
\cong H^0\bigl(D_\rho,\cO_{D_\rho}(-k_\chi H)\bigr).
\end{equation}

If $D_\rho=D'$, it follows from \textup{(S1)} in
Definition~\ref{def:seed} that
$\cO_{D'}(-k_\chi H)\simeq\cO_{D'}$.  Hence
\[
H^0\bigl(D',\cO_{D'}(-k_\chi H)\bigr)\cong\mathbb C.
\]
If $D_\rho\ne D'$, then $H|_{D_\rho}$ is nef and has positive
self-intersection by \textup{(S1)} in Definition \ref{def:seed}. Thus 
\[
H^0\bigl(D_\rho,\cO_{D_\rho}(-k_\chi H)\bigr)=0
\qquad\text{for }D_\rho\ne D'.
\]
Otherwise, there is a nonzero section \(s\) of $\cO_{D_\rho}(-k_\chi H)$. The divisor $E:=\{s=0\}$ is  effective and $E\sim-k_\chi H|_{D_\rho}$, giving $0\leq H|_{D_\rho}\cdot E=-k_\chi H^2\cdot D_\rho<0$, 
a contradiction.

\vspace{2mm}

Combining these observations with \eqref{eq:HD} and
\eqref{rho-cohom}, we conclude that
\begin{equation}\label{eigen-sheaf-cohomology-C}
    H^1\bigl(M,(\pi_*\Omega_Z^1)^\chi\bigr)\cong H^1\bigl(M,\Omega_M^1(-k_\chi H)\bigr)\cong\mathbb C.
\end{equation}
The decomposition \eqref{cohom-split} now follows from
\eqref{char-decomp-1-form} and \eqref{eigen-sheaf-cohomology-C}.
\end{proof}

Since  $D'$ is  a complete toric variety, it is simply-connected \cite[Theorem~12.1.10]{CLS11}. Moreover, $D'$ is disjoint from the branch locus $\cup_{i=1}^4 \Delta_i$ and the restriction $\pi^{-1}(D')\rightarrow D'$ is finite \'etale of degree $|G|$. Thus, the inverse image $\pi^{-1}(D')$ has $|G|$-components, labeled by the transitive $G$-action 
\begin{equation}\label{eq:split}
\pi^{-1}(D')=\coprod_{g\in G}D'_g,
\qquad
\pi|_{D'_g}\colon D'_g\xrightarrow{\sim}D'.
\end{equation}
The divisor classes  $\{[D'_g]\}_{g\in G}$ are linearly independent in $H^{1,1}(Z, \mathbb{C})$. Indeed, let $C_1\subset D'$ be the curve appearing in (S3) of Definition \ref{def:seed} and let $C^g_1\subset D'_g$ be its inverse image under $\pi|_{D'_g}$. The disjointness of components $D'_g$, together with the projection formula, gives  
\begin{equation}
    C_{1}^g\cdot D'_{g'}=0 \text{ for } g\neq g'\qquad
    C_{1}^g\cdot D'_{g'}=C_1^g\cdot \pi^*D'=C_1\cdot D'\neq 0 \text{ for } g=g'
\end{equation}
where we have used \eqref{interset-assump}  in the latter case. This establishes the linear independence of $\{[D'_g]\}_{g\in G}$. 

\vspace{2mm}

For any $\chi\neq \mathbf{1}$, the following element $w_\chi$ spans the space $H^{1}(M, (\pi_*\Omega^1_Z)^\chi)$ under the natural cohomological identification 
\begin{equation}\label{eign-generator}
    w_\chi:=\sum_{g\in G} \chi(g)[D'_g]\in H^{1,1}(Z, \mathbb{C}) \quad \text{ with } g \cdot w_\chi=\chi(g)w_\chi \text{ for any } g\in G. 
\end{equation} 
The linear independence of the classes $[D_g']$ implies that $w_\chi\neq 0$ in $H^{1,1}(Z, \mathbb{C})$. 
Moreover, if $\chi\neq \mathbf{1}$, the sum $\sum_{g\in G} \chi(g)=0$ and all elements $\{w_\chi\}_{\chi\neq \mathbf{1}, \chi\in G^*}$ form a $26$-dimensional subspace
\begin{equation}\label{Def:W-space}
W:=\left\{
\sum_{g\in G}u_g[D'_g]:
u_g\in\mathbb C,\quad \sum_{g\in G}u_g=0
\right\}\subset H^{1,1}(Z,\mathbb C).
\end{equation}

\begin{corollary}\label{direct-sum} The intersection $W\cap \pi^*H^{1,1}(M, \mathbb{C})$ is a zero-dimensional space. Moreover, 
\begin{equation}\label{direct-sum-cohom}
    H^{1,1}(Z, \mathbb{C})=\pi^{*}H^{1,1}(M, \mathbb{C})\oplus W. 
\end{equation}
\end{corollary}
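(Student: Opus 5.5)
The plan is to argue via the $G$-action on $H^{1,1}(Z,\mathbb C)$. By \eqref{cohom-split} we already have $\dim H^{1,1}(Z,\mathbb C)=\dim H^{1,1}(M,\mathbb C)+26$, and by construction $\dim W=26$. So it suffices to prove $W\cap\pi^*H^{1,1}(M,\mathbb C)=0$. The direct sum \eqref{direct-sum-cohom} then follows by a dimension count, using that $\pi^*$ is injective on $H^{1,1}(M,\mathbb C)$. Injectivity holds because $\pi_*\pi^*=27\cdot\mathrm{id}$ on cohomology, since $\pi$ is finite of degree $27$.

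For the intersection, I would use that $G$ acts trivially on $\pi^*H^{1,1}(M,\mathbb C)$, because $\pi\circ g=\pi$ for all $g\in G$. Take $v=\sum_g u_g[D'_g]\in W$ and suppose it is $G$-invariant.

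\begin{enumerate}
\item Since $h\cdot[D'_g]=[D'_{hg}]$, invariance gives $\sum_g u_g[D'_{hg}]=\sum_g u_g[D'_g]$ for every $h\in G$.
\item The classes $[D'_g]$ are linearly independent, as established above using the curves $C_1^g$. Comparing coefficients therefore gives $u_{h^{-1}g}=u_g$ for all $g,h$, so all $u_g$ equal a common constant $c$.
\item The defining condition $\sum_g u_g=0$ of $W$ then forces $27c=0$, hence $v=0$.
\end{enumerate}

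Alternatively, one can say it without coordinates. $W$ is spanned by the eigenvectors $w_\chi$ with $\chi\neq\mathbf 1$, so $W$ has no nonzero $G$-invariant vector, while every vector of $\pi^*H^{1,1}(M,\mathbb C)$ is $G$-invariant.

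The main point needing care is the compatibility of this decomposition with \eqref{cohom-split}. The invariant part $H^{1,1}(Z,\mathbb C)^G$ should be exactly $\pi^*H^{1,1}(M,\mathbb C)$, since it corresponds to the trivial-character summand $H^1(M,\Omega^1_M)$. The $\chi$-isotypic parts for $\chi\neq\mathbf 1$ are one-dimensional and, by \eqref{eign-generator}, spanned by $w_\chi$.

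I would present the dimension argument as the primary proof, because it only needs:
\begin{itemize}
\item the injectivity of $\pi^*$;
\item the triviality of the $G$-action on pulled-back classes;
\item the linear independence of the $[D'_g]$.
\end{itemize}
I expect no serious obstacle. The only subtle point is justifying that $g\in G$ permutes the components $D'_g$ as labeled in \eqref{eq:split}, i.e.\ $h(D'_g)=D'_{hg}$. This holds by the choice of labeling via the free transitive $G$-action on $\pi^{-1}(D')$.
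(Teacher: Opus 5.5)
Your proposal is correct and follows the paper's proof. Pulled-back classes are $G$-invariant while $W$ has no nonzero $G$-invariant vector, and the count $h^{1,1}(Z)=h^{1,1}(M)+26$ from \eqref{cohom-split} together with $\dim W=26$ then gives the direct sum; you also make explicit two points the paper leaves implicit, namely the injectivity of $\pi^*$ (via $\pi_*\pi^*=27\cdot\mathrm{id}$) and the coefficient comparison showing that a $G$-invariant element of $W$ vanishes.
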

\begin{proof} Any element in $\pi^* H^{1,1}(M, \mathbb{C})$ is $G$-invariant while $W$ has a unique $G$-invariant element $0$. Thus, these two spaces intersects trivially. 

The isomorphism in \eqref{cohom-split} provides  $h^{1,1}(Z)=h^{1,1}(M)+26$. Since $\dim (W)=26$, these two spaces exhaust $H^{1,1}(Z, \mathbb{C})$, giving the decomposition \eqref{direct-sum-cohom}.
\end{proof}

\subsection{Positivity of the canonical class}
We now combine the ramification formula with the cohomological splitting above to study the canonical class of $Z$. The admissibility conditions on $M$ yield the following numerical properties.

\begin{proposition}\label{prop:criterion}
Let $M^3$ be an admissible toric $3$-fold. Under the above setting and notation, the   following identities hold 
\begin{equation}\label{eq:threefold-conclusions}
     K_Z\notin\Psef(Z),\qquad
 K_Z\cdot\alpha^2>0\quad\text{for every }\alpha\in\Kah(Z).
\end{equation}\end{proposition}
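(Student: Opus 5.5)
Ramification formula first. The cover $\pi$ is branched along $\Delta_1,\dots,\Delta_4$ with cyclic inertia of order $3$, so
\[
K_Z=\pi^*K_M+\sum_{i=1}^4 2\tilde\Delta_i,
\]
where $\pi^*\Delta_i=3\tilde\Delta_i$. Hence, as $\mathbb Q$-classes,
\[
K_Z=\pi^*\Bigl(K_M+\tfrac23\sum_{i=1}^4\Delta_i\Bigr)=\pi^*\Bigl(K_M+\tfrac83H\Bigr)=\pi^*L .
\]
Both claims of \eqref{eq:threefold-conclusions} will then be reduced to statements about $L$ on $M$, using the splitting of Corollary~\ref{direct-sum}.

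\emph{Non-pseudoeffectivity.} Suppose $K_Z=\pi^*L$ were pseudoeffective. Averaging over $G$ preserves pseudoeffectivity, and $\pi_*\pi^*L=27L$. Since $\pi_*$ of a pseudoeffective class (push-forward of closed positive currents under a finite map) is pseudoeffective, $L$ would be pseudoeffective. This contradicts (S2). So $K_Z\notin\Psef(Z)$.

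\emph{Positivity on Kähler classes.} This is the main obstacle, because $\Kah(Z)$ is strictly larger than $\pi^*\Kah(M)$ owing to the extra $26$ classes in $W$. Write a real class
\[
\alpha=\pi^*\beta+w,\qquad \beta\in H^{1,1}(M,\R),\quad w=\sum_g u_g[D'_g]\in W,
\]
which is possible by \eqref{direct-sum-cohom}. The averaged class $\frac1{27}\sum_g g^*\alpha=\pi^*\beta$ is Kähler, hence so is $\beta$, because $\pi_*\alpha$ is Kähler on $M$ and $\pi_*\alpha=27\beta$.

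Now expand
\[
K_Z\cdot\alpha^2=\pi^*L\cdot(\pi^*\beta)^2+2\,\pi^*L\cdot\pi^*\beta\cdot w+\pi^*L\cdot w^2 .
\]
\begin{itemize}
\item[(a)] The first term equals $27\,L\cdot\beta^2>0$ by (S2).
\item[(b)] For the cross term, the projection formula gives $\pi^*(L\cdot\beta)\cdot[D'_g]=L\cdot\beta\cdot D'$, independent of $g$. Since $\sum_g u_g=0$, this term vanishes.
\item[(c)] For the last term, the $D'_g$ are pairwise disjoint, so $[D'_g]\cdot[D'_{g'}]=0$ for $g\ne g'$. Also $\pi^*L\cdot[D'_g]^2=L\cdot D'^2$, because $\pi|_{D'_g}$ is an isomorphism onto $D'$ with normal bundle pulled back. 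So this term equals $\bigl(\sum_g u_g^2\bigr)\,L\cdot D'^2\ge0$ by (S2).
\end{itemize}
Summing, $K_Z\cdot\alpha^2\ge 27\,L\cdot\beta^2>0$. The points to verify carefully are the reality of the decomposition (the projection onto $W$ commutes with conjugation, so $u_g\in\R$), that $\beta$ is genuinely Kähler, and the normal-bundle identification $N_{D'_g/Z}\cong(\pi|_{D'_g})^*N_{D'/M}$, which holds since $\pi$ is étale near $D'$.
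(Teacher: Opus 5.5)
Your proposal is correct and follows essentially the same route as the paper: Riemann--Hurwitz gives $K_Z\sim_{\mathbb Q}\pi^*L$, push-forward of a positive current gives $K_Z\notin\Psef(Z)$, and the decomposition $\alpha=\pi^*\beta+\sum_g u_g[D'_g]$ from Corollary~\ref{direct-sum} yields the same three-term expansion, where the cross term vanishes because $\sum_g u_g=0$ and the last term is $\bigl(\sum_g u_g^2\bigr)L\cdot D'^2\ge 0$. The one step you assert without proof is that $\beta$ is K\"ahler, and it needs an argument because the push-forward of a K\"ahler form is only a K\"ahler current that is singular along the branch locus; the paper handles this with $\beta\cdot T=\frac1d\,\pi^*\beta\cdot\widetilde T>0$ for every torus-invariant curve $T$, using that these curves generate the Mori cone of $M$, and your $\pi_*\alpha$ version can be completed the same way via $\pi_*\alpha\cdot T=\alpha\cdot\pi^*T>0$ on the invariant curves.
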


\begin{proof} Recall that the ramification index along each $\Delta_i$ equals $3$ for each $i$. Thus, using the Riemann--Hurwitz formula implies that 
\begin{equation}\label{eq:Kpull}
 K_Z \sim_{\mathbb Q} \pi^*\left(K_M+\frac23\sum_{i=1}^4\Delta_i\right)
     \sim_{\mathbb Q}\pi^*\left(K_M+\frac83H\right) \sim_{\mathbb Q} \pi^*L.
\end{equation}

By Corollary~\ref{direct-sum}, every $\alpha\in\Kah(Z)$ has a
unique expression
\begin{equation}\label{eq:alpha}
\alpha=\pi^*\beta+\sum_{g\in G}u_g[D'_g],
\qquad
\beta\in H^{1,1}(M,\mathbb R),\quad
u_g\in\mathbb R,\quad
\sum_{g\in G}u_g=0.
\end{equation}
Since $G$ permutes the components $D'_g$ transitively, averaging
over $G$ gives
$
\frac1{27}\sum_{h\in G}h^*\alpha=\pi^*\beta.
$
The K\"ahler cone is convex and preserved by $G$, so
$\pi^*\beta$ is K\"ahler.

We next show that $\beta\in\Kah(M)$. For every torus-invariant
curve $T\subset M$, choose an irreducible component
$\widetilde T\subset\pi^{-1}(T)$ mapping onto $T$, and let $d$
be its degree over $T$. The projection formula gives
\[
\beta\cdot T=\frac1d\,\pi^*\beta\cdot\widetilde T>0.
\]
Since $M$ is smooth, projective, and toric, its cone of curves
is generated by the finitely many invariant curves. These
inequalities therefore imply that $\beta$ is ample, and hence
K\"ahler \cite[Theorems~6.3.20 and~6.3.22]{CLS11}.

The components $D'_g$ are pairwise disjoint, and
\eqref{eq:split} gives
$
(D'_g)^2=\pi^*D'\cdot D'_g.
$
Thus \eqref{eq:Kpull}, \eqref{eq:alpha}, and the projection
formula yield
\begin{align*}
K_Z\cdot\alpha^2
&=27L\cdot\beta^2
  +2\left(\sum_{g\in G}u_g\right)L\cdot D'\cdot\beta
  +\left(\sum_{g\in G}u_g^2\right)L\cdot(D')^2\\
&=27L\cdot\beta^2
  +\left(\sum_{g\in G}u_g^2\right)L\cdot(D')^2
>0,
\end{align*}
where the final inequality follows from $\beta\in\Kah(M)$
and condition \textup{(S2)} in Definition~\ref{def:seed}.

Finally, if $K_Z$ is pseudoeffective, then its pushforward
$\pi_*K_Z\equiv27L$ would also be pseudoeffective, contradicting
\textup{(S2)} of Definition \ref{def:seed}.
\end{proof}

\section{Proof of Theorem \ref{thm:A}}
\label{sec:proof-main-theorem}
In Section~\ref{sec:abliean-cover}, we constructed a $3$-fold $Z$ that provides a counterexample to Conjecture~\ref{conj:yang}. By Proposition~\ref{prop:criterion}, $Z$ admits no K\"ahler metric with positive total scalar curvature. In this section, we show that $Z$ is simply connected and admits a complete Riemannian metric with positive scalar curvature.

\begin{proposition}\label{prop:topology}
 Let $M$ be an admissible toric $3$-fold and let $H$ and $D'$ be defined as in Definition \ref{def:seed}. Suppose that the abelian cover $\pi: Z^3\rightarrow M$ is constructed as in Section \ref{sec:abliean-cover}. Then $Z$ is simply-connected and non-spin. Moreover, $Z$ admits a Riemannian metric with positive scalar curvature.  
\end{proposition}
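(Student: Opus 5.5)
Three steps: show $Z$ is simply connected, show $Z$ is non-spin using (S3), and then invoke Gromov--Lawson for simply connected non-spin manifolds of real dimension $6\ge 5$.

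\emph{Simple connectivity.} The manifold $M$ is a smooth complete toric variety, so it is simply connected. The cover $\pi$ is branched along the smooth ample-type divisors $\Delta_i\in|H|$. I would compute $\pi_1(Z)$ from the orbifold fundamental group of the pair $(M,\sum\frac{2}{3}\Delta_i)$, i.e.\ from $\pi_1(M\setminus\bigcup\Delta_i)$ modulo cubes of meridians. Since $\Delta_1,\dots,\Delta_4$ are general members of the base-point-free big system $|H|$, a Zariski/Nori-type Lefschetz theorem shows $\pi_1(M\setminus\bigcup\Delta_i)$ is abelian, generated by the meridians $\gamma_1,\dots,\gamma_4$. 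The only relation comes from $\Delta_i\sim\Delta_4$, namely the classes $[\gamma_i]$ sum to zero in $H_1$; more precisely, $H_1(M\setminus\bigcup\Delta_i)$ is the kernel-dual of the map $\mathbb Z^4\to \operatorname{Pic}(M)$, $e_i\mapsto H$. Quotienting by $\gamma_i^3$ gives exactly $G\cong(\mathbb Z/3)^3$ when $H$ is primitive modulo $3$; the condition $H\cdot C_2\not\equiv0\pmod 3$ in (S3) guarantees this. Hence the orbifold group equals $G$, and the associated $G$-cover $Z$ has trivial fundamental group.

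Alternatively, if the Nori argument is delicate, the fallback is direct: $\pi_1(Z)$ surjects from the kernel of the map from the orbifold group to $G$. The orbifold group is the abelian group above, and (S3) forces the map to be injective. I expect this step to be the main obstacle, specifically justifying that the complement group is abelian for these non-ample, merely big and nef, divisors. My plan there is to use Nori's theorem for nodal curves generalized to divisors with $\Delta_i^2\cdot D>0$ on every component, or to reduce to a general surface section.

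\emph{Non-spin.} We have $w_2(Z)=c_1(Z)\bmod 2$. Take the curve $C_1^{e}\subset D'_e$. Since $\pi$ is étale near $D'$, we get $c_1(Z)\cdot C_1^e=c_1(M)\cdot C_1$, which is odd by (S3). So $c_1(Z)$ is not divisible by $2$, and $Z$ is not spin.

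\emph{Positive scalar curvature.} $Z$ is a closed, simply connected, non-spin manifold of real dimension $6\ge5$. By Gromov--Lawson's theorem for simply connected non-spin manifolds, $Z$ admits a Riemannian metric of positive scalar curvature.
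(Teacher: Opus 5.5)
Your overall structure matches the paper's. The non-spin step (lift $C_1$ to $C_1^g\subset D'_g$, use that $\pi$ is \'etale near $D'$, then (S3)) and the appeal to Gromov--Lawson are the same. Your count of the orbifold group, $(\mathbb Z/3)^4/\langle(1,1,1,1)\rangle\cong G$, is exactly the paper's computation $\ker\phi/\operatorname{Im}\psi=0$ from Catanese's formula. One small correction: the relations in $H_1(M\setminus\bigcup\Delta_i;\mathbb Z)$ are $(H\cdot x)\sum_i\gamma_i=0$ for $x\in H_2(M;\mathbb Z)$, so the $\gamma_i$ need not sum to zero. Only the divisibility of $H$ being prime to $3$ matters, and $H\cdot C_2\not\equiv0\pmod 3$ supplies that.

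The gap is the step you flag yourself: that $\pi_1(M\setminus\bigcup\Delta_i)$ is abelian. Both your main line and your fallback rely on it. Without it you only know $\pi_1(Z)\cong\ker(\Gamma\to G)$ for an uncontrolled orbifold group $\Gamma$.

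The tool you name does not deliver this as stated. On a general ample surface section $S$, the curve $\sum_i\Delta_i|_S$ is nodal. Each component has self-intersection $H^2\cdot S$ but carries $r_i=3\,H^2\cdot S$ nodes, so the hypothesis $D_i^2>2r_i$ of Nori's theorem fails for the union. Applying Nori one component at a time ($D=\Delta_i|_S$ with $r_i=0$, the other three curves as $C$) might work. But then you need each kernel to be central, not merely abelian, to conclude that the whole group is abelian (it is generated by meridians since $\pi_1(M)=1$), and you have not argued this.

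The paper settles exactly this point by citing Catanese's Proposition~1.8. The $\Delta_i$ are smooth, cross normally, and move in the base-point-free system $|H|$ with $\Delta_i\cdot\Delta_j\cdot H=H^3>0$, so they are flexible in Catanese's sense. Over the simply connected $M$, Catanese's result then gives $\pi_1(Z)\cong\ker\phi/\operatorname{Im}\psi$ directly. Your proposal is missing that input, or a genuine proof that the meridians are central.
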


Remark that since $M$ is a smooth complete toric variety, it is simply
connected by \cite[Theorem~12.1.10]{CLS11}.

\begin{proof}

Recall that $Z^3$ and $M^3$ are projective and the map $\pi: Z^3\rightarrow M^3$ is a finite morphism of degree $27$. Moreover, its branch locus is a simple normal crossing divisor $\sum_{i=1}^4\Delta_i$ and for each $i\neq j$, $\Delta_i\cdot \Delta_j\cdot H=H^3>0$. Thus, their intersection has codimension two and each $\Delta_i$ is flexible in the sense of \cite[Definition~1.4]{Catanese84}. 

Let $G_i\subset G$ be the inertia subgroup along $\Delta_i$. By Remark \ref{inertia-group}, these groups are generated by $e_1$, $e_2$, $e_3$ and $e_4:=-e_1-e_2-e_3$, respectively. Identify each $G_i$ with $\mathbb{Z}_3$ sending $1$ to $e_i$, and define  
\[
\phi\colon\bigoplus_{i=1}^4G_i\longrightarrow G,
\qquad
(b_1,b_2,b_3,b_4)\longmapsto
\sum_{i=1}^3b_ie_i-b_4\sum^3_{i=1} e_i.
\]
Since $e_1$, $e_2$ and $e_3$ form a basis of $G$, we have that $\ker\phi=\langle(1,1,1,1)\rangle
\cong\mathbb Z_3$.

For each $i$, define the group homomorphism $\psi_i\colon H_2(M,\mathbb Z)\longrightarrow G_i$
\begin{equation*}
    \psi_i(x):=
\bigl\langle c_1(\mathcal O_M(\Delta_i)),x\bigr\rangle
\bmod3, \qquad \psi:=(\psi_1, \cdots, \psi_4): H_2(M, \mathbb{Z})\rightarrow \bigoplus_{i=1}^4 G_i.
\end{equation*}
 Since $\Delta_i\sim H$ for each $i$, it follows that 
$\psi(x)=(\langle c_1(\mathcal O_M(H)),x\bigr\rangle)\cdot (1,1,1,1)$. 
Thus, $\operatorname{Im} \psi\subset \ker \phi$. By (S3) in Definition \ref{def:seed}, the curve $C_2$ satisfies   $H\cdot C_2\not\equiv0\pmod3$.
Hence $\psi([C_2])$ is a generator of the cyclic group 
$\ker\phi$. Consequently, 
\begin{equation}
    \operatorname{Im}\psi=\ker\phi .
\end{equation}
Catanese's formula \cite[Proposition~1.8]{Catanese84} now gives
\[
\pi_1(Z)\cong
\ker\phi/\operatorname{Im}\psi=0.
\]
Thus $Z$ is simply connected.

\vspace{2mm}

It remains to show that $Z$ is non-spin. Let $C_1\subset D'$ be the curve in Definition~\ref{def:seed} (S3), and fix a lift $C_1^g\subset D'_g$ under the isomorphism in \eqref{eq:split}. Since $D'$ is disjoint from the branch locus, $\pi$ is \'etale in a neighborhood of $\pi^{-1}(D')$,  which induces the isomorphism  
\[
TZ|_{C_1^g}\simeq (\pi|_{C_1^g})^*(TM|_{C_1}).
\]
Combining it with the isomorphism $\pi|_{C^g_1}: C^g_1\rightarrow C_1$ and (S3) in Definition \ref{def:seed} yields
\[
  c_1(TZ)\cdot C^g_1=c_1(TM)\cdot C_1\equiv1\pmod2.
\]
The second Stiefel--Whitney class of $T_{\mathbb{R}}Z$ is the mod-two reduction of \(c_1(TZ)\). Hence,
\[
  \left\langle w_2(T_{\mathbb R}Z),[C_1^g]\right\rangle
  =c_1(TZ)\cdot C_1^g\equiv 1\pmod2.
\]
Thus, $Z$ is non-spin.

The underlying smooth manifold of \(Z\) is closed, non-spin, simply-connected and of real dimension six. By \cite[Corollary~C]{GromovLawson80}, it admits a Riemannian metric of positive scalar curvature.
\end{proof}

\begin{proof}[Proof of Theorem \ref{thm:A}]
The triple $(M,H,D')$ constructed in Appendix~\ref{app:toric-seed} satisfies the conditions of Definition~\ref{def:seed}. Let $Z^3$ be the associated smooth projective $3$-fold constructed in Section~\ref{sec:abliean-cover}. It follows by Proposition~\ref{prop:criterion} that 
\[
K_Z\notin\Psef(Z),
\qquad
K_Z\cdot\alpha^2>0
\quad\text{for every }\alpha\in\Kah(Z).
\]
Moreover, Proposition~\ref{prop:topology} shows that $Z$ admits a Riemannian metric of positive scalar curvature. This gives the proof for $3$-dimensional case. 

\vspace{2mm}

For $n=3+m\geq 4$, fix an elliptic curve $E$ and let \[T:=E^m,\qquad X:=Z\times T.\]
Then, $X$ is a smooth projective variety of dimension $n$. Since  $T$ has trivial canonical bundle, the product formula gives  $K_X=p^*_Z K_Z$, where $p_Z: X^n\rightarrow Z^3$ denotes the projection map. 

\vspace{2mm}

We now verify the required numerical properties of $K_X$.  Since \(H^1(Z,\mathbb C)=0\), using the K\"unneth formula implies that 
\[
 H^{1,1}(X,\mathbb R)
 =
 p_Z^*H^{1,1}(Z,\mathbb R)
 \oplus
 p_T^*H^{1,1}(T,\mathbb R).
\]
Thus, any \(\Omega\in\Kah(X)\) has a unique expression
$
 \Omega=p_Z^*\alpha+p_T^*\beta.
$
Restricting $\Omega$ to the fibers of the two projections shows that $\alpha\in\Kah(Z)$ and $\beta\in\Kah(T)$. Since $K_X\sim p_Z^*K_Z$, the binomial expansion yields
\begin{equation}\label{eq:product-intersection}
 K_{X}\cdot\Omega^{n-1}
 =
 \binom{m+2}{2}
 (K_Z\cdot\alpha^2)
 (\beta^m)
 >0.
\end{equation}

We also have $K_X\notin\Psef(X)$. Indeed, suppose that $c_1(K_X)$ were represented by a closed positive $(1,1)$-current $\Theta$. Choose a K\"ahler form $\omega_T$ on $T$ and $V_T:=\int_T\omega_T^m>0$. Then,
\[
\Theta'
:=
\frac1{V_T}(p_Z)_*
\bigl(\Theta\wedge p_T^*(\omega_T^m)\bigr)
\]
is a closed positive $(1,1)$-current on $Z$. The projection formula gives
\[
[\Theta']
=
\frac1{V_T}(p_Z)_*
\bigl(p_Z^*c_1(K_Z)\smile p_T^*[\omega_T]^m\bigr)
=
c_1(K_Z),
\]
contradicting $K_Z\notin\Psef(Z)$.

\vspace{2mm}

Since $Z$ admits a Riemannian metric with positive scalar curvature and $T$ is flat, $X$ also has a Riemannian metric with positive scalar curvature. This completes the proof.
\end{proof}

\appendix

\section{An explicit admissible toric $3$-fold}\label{app:toric-seed}

We construct an explicit triple $(M,H,D')$ satisfying Definition~\ref{def:seed}.

\subsection{The toric $3$-fold} 
Consider the polytope
\begin{equation}\label{eq:polytope}
 \mathcal P=\left\{(x,y,z)\in\mathbb R_{\geq0}^3:
 x\leq2,\ y\leq x+2,\ y+z\leq2x+1\right\}.
\end{equation}
Its primitive inward face normal vectors are
\begin{equation}\label{eq:rays}
\begin{aligned}
 v_0&=(1,0,0),& v_1&=(1,-1,0),& v_2&=(0,1,0),\\
 v_3&=(0,0,1),& v_4&=(2,-1,-1),& v_5&=(-1,0,0).
\end{aligned}
\end{equation}
Writing $ijk$ for $\operatorname{Cone}(v_i,v_j,v_k)$, the maximal cones of the normal fan $\Sigma$ of $\mathcal P$ are 
\begin{equation}\label{eq:maxcones}
 235,\ 245,\ 135,\ 145,\ 023,\ 024,\ 034,\ 134.
\end{equation}
The determinants of their primitive ray generator matrices, in this order, are
\[
 -1,\ 1,\ 1,\ -1,\ 1,\ -1,\ 1,\ -1.
\]
Thus $\Sigma$ is smooth. Since $\mathcal{P}$ is   a full-dimensional lattice polytope, its normal fan defines a smooth projective toric $3$-fold $M:=X_\Sigma$ 
\cite[Theorem~3.1.19(a) \& Theorem~6.2.1]{CLS11}.

\subsection{Divisor classes and cones} Let $D_i$ be the invariant prime divisor corresponding to $v_i$ and define
\[
 D':=D_0,\qquad P:=[D_2],\qquad Q:=[D_1+D_5],\qquad H:=[D_5].
\]
In what follows, we write $D_i$ and $H$ for their divisor classes. The divisor sequence \cite[Theorem 4.1.3 \& Proposition 4.2.6]{CLS11} gives 
\begin{equation}\label{eq:boundary-classes}
\begin{array}{c|c|c|c|c|c|c}
 \text{divisor}&D_0&D_1&D_2&D_3&D_4&D_5\\ \midrule
\text{class}&-2P+Q&Q-H&P&P-Q+H&P-Q+H&H.
\end{array}
\end{equation}
Consequently,
\begin{equation}\label{eq:canonical-class}
 \operatorname{Pic}(M)=\mathbb ZP\oplus\mathbb ZQ\oplus\mathbb ZH,
 \qquad K_M=-P-2H.
\end{equation}
Every effective divisor class on a complete toric variety has an invariant effective representative \cite[Proposition~4.3.2]{CLS11}. Moreover, $D_2=D_1+D_3$,
$D_4=D_3$ and 
$D_5=D_0+D_1+2D_3$. It follows that 
\begin{equation}\label{eq:psef}
 \operatorname{Psef}(M)=\operatorname{Cone}(D_0,D_1,D_3).
\end{equation}
These three generators are linearly independent. 

The wall-intersection formula \cite[Proposition~6.4.4]{CLS11} gives the distinct numerical classes of invariant curves, expressed in the coordinates $(P\cdot C,Q\cdot C,H\cdot C)$:
\begin{equation}\label{eq:curve-classes}
 (0,1,1),\ (1,2,2),\ (1,2,1),\ (0,0,1),\ (1,0,0),\ (0,1,0).
\end{equation}
These vectors have non-negative coordinates and include the coordinate vectors. Since invariant curves generate the Mori cone, duality gives
\begin{equation}\label{eq:kahler-cone}
 \overline{\operatorname{NE}}(M)=\mathbb R_{\geq0}^3,
 \qquad
 \operatorname{Kah}(M)
 =\{xP+yQ+zH:x,y,z>0\};
\end{equation}
see \cite[Theorem~6.3.20 and ~6.3.22]{CLS11}.

\subsection{Intersection numbers}The fan gives the relations
\begin{equation}
    D_0D_1=D_0D_5=D_1D_2=0, \quad D_2D_3D_4=D_3D_4D_5=0
\end{equation} in Chow ring. Together \eqref{eq:boundary-classes} and the normalization $\deg(D_2D_3D_5)=1$, these determine the required intersection number  \cite[Theorem~12.4.4 \& (12.5.9)]{CLS11}, 
\begin{equation}\label{eq:needed-intersections}
 H^2\cdot(D_0,D_1,D_2,D_3,D_4,D_5)=(0,1,2,1,1,3).
\end{equation}
Let \(L=K_M+\frac83 H\) and $B=xP+yQ+zH$. It follows that 
\begin{equation}\label{eq:L-intersections}
 L\cdot B^2
 =\frac23\left(x^2+xy+xz+y^2+2yz\right),
 \qquad
 L\cdot D'^2=2.
\end{equation}

\subsection{Admissible properties} 
We now verify Definition \ref{def:seed}. It follows by \eqref{eq:kahler-cone} that  
\begin{center}
$H$ is nef. 
\end{center}
No cone of $\Sigma$ contains both $\R_{\ge0} v_0$ and $\R_{\ge0}v_5$. Hence,  the orbit--cone correspondence \cite[Theorem 3.2.6]{CLS11} gives $D_0\cap D_5=\varnothing$. The defining section of $D_5$ restricts to a nowhere-vanishing section on $D'=D_0$, which implies 
\[
\cO_M(H)|_{D'}\simeq \cO_{D'}.
\] 
Together with \eqref{eq:needed-intersections}, this proves (S1) in Definition \ref{def:seed} .

Using \eqref{eq:boundary-classes} and \eqref{eq:canonical-class} gives $L=-P+\frac23H=\frac23D_0-\frac13D_1+\frac13D_3$. Since the generators in \eqref{eq:psef} are linearly independent, the negative coefficient of $D_1$ implies  $L\notin\operatorname{Psef}(M)$. For any  $B=xP+yQ+zH\in\operatorname{Kah}(M)$, the coefficients $x$, $y$ and $z$ are positive.  Using  \eqref{eq:L-intersections} yields
\[
 L\cdot B^2>0,
 \qquad
 L\cdot D'^2=2\geq0.
\]
which verifies (S2) in Definition \ref{def:seed}. 

\vspace{2mm}

Write  $V(\tau)=\overline{O(\tau)}$ for the orbit closure with a cone $\tau\in \Sigma$ and consider the integral curves
\[
 C_1=V(\operatorname{Cone}(v_0,v_2))\subset D_0=D',
 \qquad
 C_2=V(\operatorname{Cone}(v_1,v_3)).
\]
The wall-intersection formula gives
\[
 (P\cdot C_1,Q\cdot C_1,H\cdot C_1)=(1,0,0),
 \qquad
 (P\cdot C_2,Q\cdot C_2,H\cdot C_2)=(0,0,1).
\]
Since $D'=-2P+Q$ and $c_1(M)=P+2H$, it follows that
\[
 D'\cdot C_1=-2\neq0,
 \qquad
 c_1(M)\cdot C_1\equiv1\pmod2,
 \qquad
 H\cdot C_2\not\equiv0\pmod3.
\]
These identities verify (S3) in Definition \ref{def:seed}. Therefore, $(M,H,D')$ satisfies all three admissibility conditions in Definition \ref{def:seed}.

\vspace{2mm}

For reproducibility, an exact arithmetic verification of all the fan, cone, and intersection calculations used above is checked by the Python script available at \url{https://github.com/Ricciflow19/toric-3fold}. Running \texttt{python3 verify\_toric\_3fold.py} terminates successfully with \texttt{All exact checks passed.}

\bibliographystyle{alpha}
\bibliography{references}

\bigskip

\noindent
\textsc{Zehao Sha}\\
Institute for Mathematics and Fundamental Physics, Hefei, 230088, China\\
\textit{Email address:} \texttt{zhsha@imfp.org.cn}

\bigskip

\noindent
\textsc{Jian Wang}\\
State Key Laboratory of Mathematical Sciences,
Academy of Mathematics and Systems Science, Chinese Academy of Sciences,
Beijing 100190, China\\
\textit{Email address:} \texttt{jian.wang.4@amss.ac.cn}

\end{document}